\documentclass{article}
\usepackage{amssymb,amsmath,amsthm,graphicx}
\usepackage{array}
\usepackage{longtable}
\usepackage{makecell}

\def\qed{\hfill {\hbox{${\vcenter{\vbox{               
   \hrule height 0.4pt\hbox{\vrule width 0.4pt height 6pt
   \kern5pt\vrule width 0.4pt}\hrule height 0.4pt}}}$}}}

\def\utr{\, \underline{\triangleright}\, }
\def\otr{\, \overline{\triangleright}\, }

\newtheorem{theorem}{Theorem}

\newtheorem{proposition}[theorem]{Proposition}

\theoremstyle{definition}
\newtheorem{example}{Example}
\newtheorem{definition}{Definition}

\date{}

\title{\Large \textbf{Biquandle Arrow Weight Quiver Representations}}

\author{Sam Nelson\footnote{Email: Sam.Nelson@cmc.edu. Partially supported 
by Simons Foundation collaboration grant 702597.}\and
Migiwa Sakurai\footnote{Email: migiwa@shibaura-it.ac.jp}}

\begin{document}
\maketitle

\begin{abstract}
We define an infinite family of quiver representation-valued invariants
of classical and virtual knots associated to a choice of data vector
consisting of a biquandle, abelian group, set of biquandle arrows weights
with values in the abelian group, coefficient ring and set of biquandle 
endomorphisms. As an application we extract four new polynomial invariants
as decategorifications. We provide examples to show that these invariants
are proper enhancements of the biquandle counting invariant and biquandle 
coloring quiver.
\end{abstract}

\parbox{6in} {\textsc{Keywords:} Biquandles, homsets, enhancements, virtual 
knots, Gauss diagrams, biquandle arrow weights, quiver representations, 
cagtegorification

\smallskip

\textsc{2020 MSC:} 57K12}

\section{\large\textbf{Introduction}}\label{I}

\textit{Biquandles} are objects in an algebraic category designed for 
defining computable invariants of oriented classical and virtual knots. 
More precisely, to every oriented classical or virtual
knot $K$ there is an associated \textit{fundamental biquandle}
$\mathcal{B}(K)$; given any finite biquandle $X$, the \textit{homset} 
or set of biquandle homomorphisms $\mathrm{Hom}(\mathcal{B}(K),X)$ is
an invariant representable by a set of \textit{colorings} of a diagram
of $K$ by elements of $X$. These colorings of knot diagrams can also be 
represented as colorings of corresponding \textit{Gauss diagrams} 
representing the preimage $S^1$ of the knot with signed arrows
representing crossing points.

Further invariants of oriented classical and virtual knots known as
\textit{enhancements} can be defined via invariants $\phi$ of homset 
elements. More precisely, the multiset of $\phi$-values over the homset 
defines an invariant of classical and virtual knots. More about biquandles
can be found in \cite{EN}.

In \cite{NS} we defined an enhancement of the biquandle homset invariant
using an algebraic structure called a \textit{biquandle arrow weight}, 
a map from pairs of elements of a coloring biquandle $X$ into an abelian
group $A$ such that the sum of such weights over all arrow crossings in 
a biquandle-colored Gauss diagram representing a biquandle homset element
defines an enhancement of the homset invariant.

In \cite{CN} a special case of the biquandle homset invariant was 
categorified to give a quiver-valued invariant of classical and virtual 
knots, the \textit{quandle coloring quiver}. In \cite{NS2} we applied this
construction to categorify the biquandle arrow weight multiset invariant, 
obtaining several new polynomial invariants via decategorification.

In this paper we adapt a construction from \cite{N24} to define a quiver 
representation-valued invariant of oriented classical and virtual knots. 
From this quiver representation we define further polynomial 
decategorification invariants. The paper is organized as follows. In Section 
\ref{R} we review the basics of biquandles, arrow weights and quivers. 
In Section \ref{BW} we introduce our new construction and show that it 
defines an infinite family of invariants of oriented classical and virtual 
knots. We collect some examples and we conclude in 
Section \ref{Q} with some questions for future research.

This paper, including all text, illustrations, and python code for 
computations, was written strictly by the authors without the use of 
generative AI in any form.

\section{\large\textbf{Review of Biquandles, Arrow Weights and Quivers}}\label{R}

In this section, we will review biquandles, arrow weights and quivers. A 
fuller account can be found in our earlier studies \cite{NS} and \cite{NS2}.

A \textit{biquandle} is an algebraic structure on the set 
$X$ with two binary operations $\utr,\otr$ (see \cite{EN} or \cite{NS}), 
subject to axioms derived from the Reidemeister moves.
More precisely, a \textit{biquandle coloring} (or an \textit{$X$-coloring}) 
of an oriented knot, or link diagram $D$ by a biquandle $X$, is a rule that 
assigns to each semiarc in $D$
an element of $X$, such that at every crossing the condition
\[\scalebox{0.58}{\includegraphics{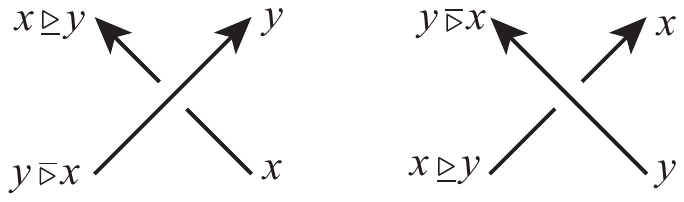}}\]
holds. This condition is called the the \textit{biquandle coloring condition}.

More precisely, a biquandle is a set $X$ with operations $\utr, \otr$
such that 
\begin{itemize}
\item[(i)] for all $x\in X$ we have $x\utr x=x\otr x$,
\item[(ii)] for all $x,y\in X$ the map $S:X\times X\to X\times X$ and the
maps $\alpha_y,\beta_y:X\to X$ given by 
\[S(x,y)=(y\otr x, x\utr y), \quad \alpha_y(x)=x\utr y\quad
\mathrm{and}\quad \beta_y(x)=x\otr y\]
are invertible, and 
\item[(iii)] for all $x,y,z\in X$ we have
\[\begin{array}{rcl}
(x\otr y)\otr(z\otr y) & = & (x\otr z)\otr(y\utr z) \\
(x\otr y)\utr(z\otr y) & = & (x\utr z)\otr(y\utr z) \\
(x\utr y)\utr(z\otr y) & = & (x\utr z)\utr(y\utr z) \\
\end{array}.\]
\end{itemize} 

Finite biquandles can be specified by listing their operation tables; for
example, the tables
\[
\begin{array}{r|rrr}
\utr & 1 & 2 & 3 \\ \hline
1 & 1 & 1 & 1 \\
2 & 2 & 2 & 3 \\
3 & 3 & 3 & 3 \\
\end{array}
\quad
\begin{array}{r|rrr}
\utr & 1 & 2 & 3 \\ \hline
1 & 1 & 3 & 2 \\
2 & 3 & 2 & 1 \\
3 & 3 & 1 & 3 \\
\end{array}
\]
specify a biquandle structure on the set $X=\{1,2,3\}$. See \cite{EN} for more.

The biquandle axioms represent the necessary conditions for biquandle
colorings to be preserved by the Reidemeister moves. They state that, for 
any biquandle coloring of a diagram before a Reidemeister move, there exists 
a unique coloring of the diagram after the move. This coloring coincides with 
the original coloring everywhere except within the neighborhood of the move.
Hence, we see that the number of colorings of an oriented knot or link 
diagram with respect to a finite biquandle $X$ is an integer-valued invariant 
(which may be zero). This integer-valued invariant is denoted by 
$\Phi_X^{\mathbb{Z}}$ and is called the \textit{biquandle counting invariant}.

In particular, for each oriented classical or virtual knot $K$, there is a 
\textit{fundamental biquandle} $\mathcal{B}(K)$ described using generators 
from semiarcs and relations at the crossings. An $X$-coloring gives, and 
is uniquely given by, a \textit{biquandle homomorphism}
$f:\mathcal{B}(K)\to X$, modulo a choice of presentation of the fundamental
biquandle of the knot; see \cite{N24} for an in-depth discussion. 
The sets $\mathrm{Hom}(\mathcal{B}(K),X)$ of biquandle homomorphisms, 
called the \textit{biquandle homset}, form an invariant of $K$ for each
finite biquandle $X$.
More precisely, selecting another diagram for $K$ alters how each homset 
element is represented, with two $X$-colored diagrams representing the 
same homset element related by $X$-colored Reidemeister moves. 

Choosing a diagram for $K$ is similar to choosing the input and output bases 
for vector spaces in linear algebra.
In this comparison, $X$-colored diagrams corresponds to homset elements, 
just as matrices correspond to linear transformations, where homset elements 
plays the role of a change of basis matrices.

Let $K$ be a virtual knot, and let $D$ be a virtual knot diagram of an 
oriented virtual knot $K$.
Then $D$ can be seen as the image $f(\mathbb{S}^1)$ of a generic immersion 
$f : \mathbb{S}^1$ $\to$ $\mathbb{R}^2$.
Recall that a \textit{Gauss diagram} for $D$ is the preimage of $D$ with 
chords, where each one links the preimages of each real crossing.
We can describe the crossing information of each real crossing on the 
corresponding chord by directing the chord toward the undercrossing point 
and assigning to each chord with the sign (that is, the local writhe) of the 
crossing, as shown:
\[\includegraphics[width=4.5cm,clip]{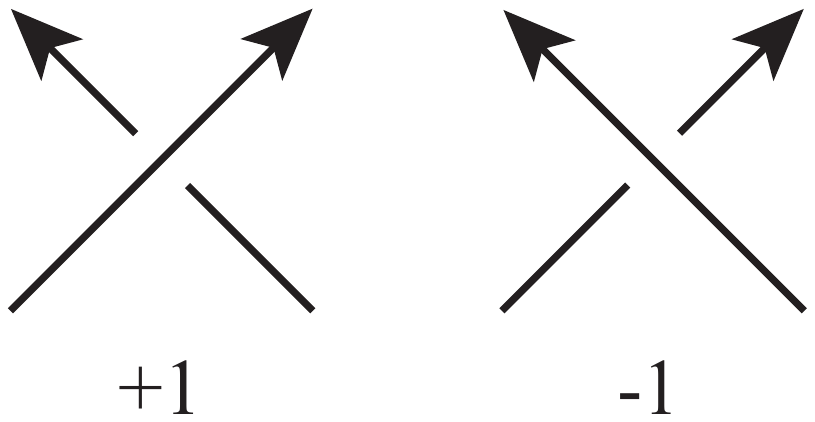}.\]

A \textit{biquandle coloring} of a Gauss diagram by a biquandle $X$ is an 
assignment of an element of $X$ to each segment of the circle between the 
arrowheads and tails, such that the condition 
\[\scalebox{0.7}{\includegraphics{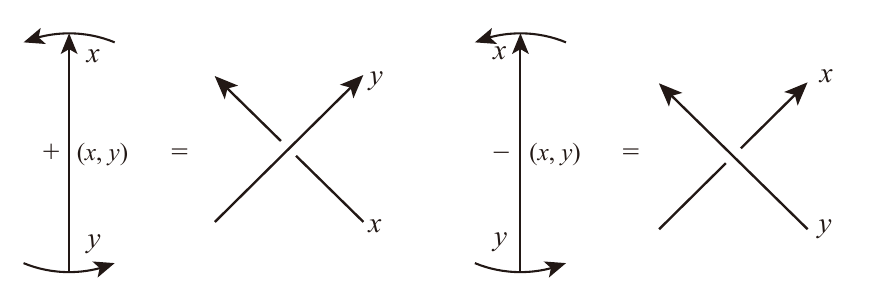}}\]
at every arrow is satisfied.
In particular, to each arrow in a biquandle-colored Gauss diagram we 
associate an ordered pair $(x,y)$ of biquandle elements together with a sign.
The set of biquandle colorings of a Gauss diagram $D$ that represents a 
virtual knot $K$ by a biquandle $X$ can be identified with the homset 
$\mathrm{Hom}(\mathcal{B}(K),X)$, where each $X$-coloring represents a 
homset element, and choosing diagram $D$ is analogous to choosing a basis 
for a vector space.

Next, recall from \cite{NS} that a \textit{biquandle arrow weight} is a 
function $\phi$ that sends pairs of pairs of biquandle elements to elements 
of an abelian group $A$, and that it satisfies axioms found in \cite{NS}.
The biquandle arrow weight axioms are chosen so that the sum $\Sigma_D$ of 
$\phi((a,b),(c,d))$ values over the set of arrow crossings with arrows 
labeled $(a,b)$ and $(c,d)$, and sign given by the product of arrow signs 
over the set of arrows in an $X$-colored Gauss diagram
\[\parbox{15em}{\includegraphics{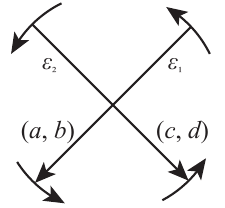}} \quad +\varepsilon_1\varepsilon_2\phi((a,b),(c,d))\]
remain unchanged under Gauss diagram Reidemeister moves.

Given a finite biquandle $X$ and a biquandle arrow weight $W$ with values 
in an abelian group $A$, for any oriented classical or virtual knot $K$ the 
multiset of values over the set of $X$-colorings of any Gauss diagram $D$ 
representing $K$ is an invariant of knots, called the \textit{biquandle 
arrow weight multiset} and denoted by $\Phi_X^{W,M}(K)$.
We can turn this multiset into a polynomial form by summing, over the 
multiset, a formal variable raised to the power of each multiset element, 
which gives the \textit{biquandle arrow weight polynomial} $\Phi_X^{W}(K)$.
Note that, depending on $A$, the polynomial may be seen as a bookkeeping 
device rather than a true polynomial, since the exponents are elements of $A$.

We then have:

\begin{theorem} (\cite{NS})
The biquandle arrow weight multiset and polynomial are invariants of classical
and virtual knots.
\end{theorem}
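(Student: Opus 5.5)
The plan is to reduce invariance of the multiset to invariance of a single number attached to each homset element. By the earlier discussion, $X$-colorings of any Gauss diagram $D$ of $K$ are in bijection with $\mathrm{Hom}(\mathcal{B}(K),X)$. If $D'$ is obtained from $D$ by a single Gauss diagram Reidemeister move, each $X$-coloring of $D$ extends to a unique $X$-coloring of $D'$ that agrees with it outside the neighborhood of the move. This gives a bijection between the two coloring sets that respects the identification with homset elements. Since any two Gauss diagrams of $K$ are related by a finite sequence of Reidemeister moves, it is enough to show one thing: for each single move and each coloring, the weight sum
\[\Sigma_D=\sum \varepsilon_1\varepsilon_2\,\phi((a,b),(c,d)),\]
taken over arrow crossings of the colored diagram, equals the corresponding sum $\Sigma_{D'}$. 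Then the map from colorings to $A$ is transported along the bijection. The multiset of values is therefore unchanged, and so is the polynomial, which is just a bookkeeping encoding of the multiset.

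For the single-move check I would use the Gauss diagram versions of the Reidemeister moves. Arrow crossings involving only arrows outside the move region keep their labels, because the coloring is unchanged there. So only crossing pairs involving the new, removed, or modified arrows need examination. I would go through the moves in turn.

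\textbf{Move I.} This adds an isolated arrow whose endpoints are adjacent on the circle. It crosses no other arrow, so it contributes nothing.

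\textbf{Move II.} This adds two oppositely signed arrows with the same label. Any other arrow crossing one of them crosses the other with the same pair of labels and opposite sign product, so the contributions cancel. The exceptions are configurations where the labels of the pair differ, which depend on the ordering of endpoints. These are exactly what the arrow weight axioms of \cite{NS} are designed to control.

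\textbf{Move III.} The three arrows of the triangle have their endpoints permuted. Arrow crossings with outside arrows persist, with labels possibly changed. Crossings among the three arrows may appear or disappear, depending on the variant of the move.

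The main obstacle is Move III and the label-changing cases of Move II. Here the change in $\Sigma$ is a nontrivial signed combination of $\phi$-values evaluated at pairs related by the operations $\utr,\otr$. One must show that each such combination vanishes by one of the biquandle arrow weight axioms. The approach I would try first is to fix a generating set of oriented Gauss diagram Reidemeister moves; a Polyak-style minimal set keeps the case count small. For each move in that set I would write the local difference $\Sigma_{D'}-\Sigma_D$ explicitly, with an arbitrary external arrow labeled $(c,d)$ crossing the region. I would then match each resulting identity to the corresponding axiom in \cite{NS}, using the biquandle axioms (i)--(iii) to put the labels in standard form. Since the axioms were derived precisely so that these differences vanish, the check reduces to this bookkeeping. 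Invariance of $\Phi_X^{W,M}(K)$ and $\Phi_X^{W}(K)$ then follows.
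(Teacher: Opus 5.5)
Your proposal follows essentially the same route as the paper. The paper does not reprove the result; it cites \cite{NS} and notes that the arrow weight axioms are chosen precisely so that $\Sigma_D$ is unchanged by Gauss diagram Reidemeister moves, so transporting the weights along the move-compatible bijection of colorings preserves the multiset, and hence the polynomial.

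One correction to your Move II sketch. The two new arrows do share a label $\ell$, and their cross terms with external arrows cancel as you say. However, in the coherently oriented version of the move the two new arrows cross each other and contribute $-\phi(\ell,\ell)$. So the configuration the axioms must handle there is this mutual crossing, which forces $\phi(\ell,\ell)=0$, not a mismatch of labels. This is consistent with the zero entries $\phi((a,b),(a,b))$ of the tensors $b_1,b_2$ in the paper's first example.
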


Finally, recall the infinite family of categorfications of the biquandle
arrow weight invariants introduced in \cite{NS2}.
Let $K$ be an oriented virtual knot represented by a Gauss diagram $D$
(see \cite{EN,GPV}).
Let $X$ be a finite biquandle, $A$ an abelian group, and $\beta$
a biquandle arrow weight for $X$ with coefficients in $A$.
Recall that a map $\phi:X\to X$ is called \textit{biquandle endomorphism} if 
\[\phi(x\, \utr\, x')=\phi(x)\, \utr\, \phi(x') \ \mathrm{and} \ 
\phi(x\, \otr\,  x')=\phi(x)\, \otr\, \phi(x').\]
holds for all $x,x'\in X$.
Given an $X$-coloring of $D$, if we apply $\phi$ to all of the semiarc 
colors
\[\includegraphics{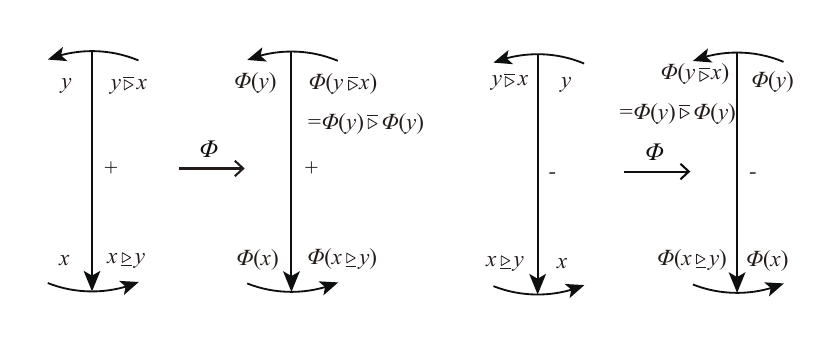}\]
the result is again an $X$-coloring of $D$.

It then follows, as in \cite{CN}, that the directed graph or \textit{quiver}, 
with a vertex for each $X$-coloring of $D$ and directed edge connecting each 
coloring to its image under each $\phi\in S$, is unchanged by Reidemeister 
moves and therefore defines an invariant of virtual knots, which we call the 
\textit{biquandle coloring quiver} $\mathcal{BCQ}_{X,S}(K)$.
When $S=\mathrm{Hom}(X,X)$ is the complete set of endomorphisms, we say 
that $\mathcal{BCQ}_{X,S}(K)$ is the \textit{full quiver}.

Since the vertices in the quiver represent the elements of the biquandle
homset, given a biquandle arrow weight $W$ we can assign to each vertex 
in the quiver the corresponding $\Sigma_D$ value, obtaining a 
\textit{weighted quiver} $\mathcal{Q}_{X,W,S}(K)$, that is, a quiver 
weighted by elements of $A$ at the vertices.
We showed the following theorem in \cite{NS2}:

\begin{theorem}[\cite{NS2}]
The weighted quiver $\mathcal{Q}_{X,W,S}(K)$ is an invariant of oriented 
classical and virtual knots.
\end{theorem}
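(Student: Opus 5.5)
The plan is to combine the two invariance facts already available: the biquandle arrow weight sum $\Sigma_D$ is unchanged under $X$-colored Reidemeister moves, and the biquandle coloring quiver $\mathcal{BCQ}_{X,S}(K)$ is unchanged under Reidemeister moves. The target is a statement about weighted quivers up to isomorphism. So we fix two Gauss diagrams $D$ and $D'$ of $K$ related by a single Gauss diagram Reidemeister move. We then build an explicit bijection $\Psi$ from the $X$-colorings of $D$ to those of $D'$, and check that $\Psi$ preserves both the edges and the vertex weights. The general case follows by composing such isomorphisms along a sequence of moves.

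\emph{Step 1: the bijection.} Since the biquandle axioms give, for each coloring of $D$, a unique coloring of $D'$ that agrees with it outside the neighborhood of the move, we get a well-defined map $\Psi$. Applying the same reasoning to the inverse move shows $\Psi$ is a bijection. Equivalently, both colorings represent the same element of $\mathrm{Hom}(\mathcal{B}(K),X)$.

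\emph{Step 2: edges are preserved.} Let $f$ be a coloring of $D$ and $\phi\in S$ an endomorphism. We need $\Psi(\phi\circ f)=\phi\circ\Psi(f)$.
\begin{itemize}
\item Both colorings agree with $\phi\circ f$ outside the neighborhood of the move: the first by construction, and the second because $\Psi(f)$ agrees with $f$ there.
\item The second is a valid $X$-coloring of $D'$, because $\phi$ is a biquandle endomorphism and so respects the coloring condition at each arrow.
\item By uniqueness of the extension across the move, the two colorings coincide.
\end{itemize}
Hence $\Psi$ sends the edge $f\to\phi\circ f$ labeled by $\phi$ to the edge $\Psi(f)\to\Psi(\phi\circ f)$ labeled by $\phi$. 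This is exactly the argument of \cite{CN} giving invariance of $\mathcal{BCQ}_{X,S}(K)$, so we can cite it.

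\emph{Step 3: weights are preserved.} We must show that the weight $\Sigma_D(f)$ equals $\Sigma_{D'}(\Psi(f))$. The arrow weight axioms were chosen precisely so that the signed sum of $\phi((a,b),(c,d))$ over crossing arrow pairs is unchanged under colored Gauss diagram moves. This is the content of the invariance theorem of \cite{NS}, applied coloring by coloring rather than just to the multiset. Combining Steps 2 and 3, $\Psi$ is an isomorphism of vertex-weighted quivers.

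The main obstacle is Step 3 in its pointwise form. The earlier theorem is stated as invariance of a multiset, whereas here we need equality for each individual matched pair $f\leftrightarrow\Psi(f)$. I would handle this by going back to the proof in \cite{NS}, which proceeds move by move: for each of the Gauss diagram moves R1, R2, R3 (with all sign and orientation variants), it checks that the local contributions agree for corresponding colorings. That local check is already pointwise, since arrows away from the move keep their labels and their crossing relations with the moved arrows in pairs that correspond. So the required equality follows directly from that argument.
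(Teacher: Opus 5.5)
Your proposal is correct and follows the same reasoning the paper relies on when it cites \cite{NS2}: the coloring quiver is invariant by the argument of \cite{CN}, and each vertex's weight $\Sigma_D$ is unchanged under $X$-colored Gauss diagram Reidemeister moves, so the induced bijection of colorings is an isomorphism of weighted quivers. The obstacle you raise in Step 3 is already settled by how the arrow weight axioms are set up: invariance of $\Sigma_D$ holds for each individual colored diagram (each homset element), and the multiset statement of \cite{NS} is derived from that.
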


Quivers are categories with vertices as objects and directed paths as 
morphisms.
Thus, this weighted quiver categorifies the biquandle arrow weight 
polynomial, which can be viewed as a decategorification obtained by summing 
over the vertices terms of the form $u^{w(v)}$, where $w(v)$ is the weight at 
each vertex.

We also obtain two standard polynomial decategorifications, giving 
polynomial invariants of classical and virtual knots.

\begin{definition}[\cite{NS2}]
Let $X$ be a finite biquandle, $S\subset \mathrm{Hom}(X,X)$ a set of 
biquandle endomorphisms, $A$ an abelian group and $W$ a biquandle arrow
weight with $A$ coefficients. Then for any oriented classical or virtual 
knot or link $K$ represented by a Gauss diagram $D$, we define 
\begin{itemize}
\item \textit{The arrow weight in-degree polynomial} to be the sum, over all
vertices $v$ in the vertex set $V$ of the quiver, 
of products of a formal variable $u$ to the power of the vertex weight 
$w(v)=\Sigma_D$ at the vertex $v$ times a formal variable $w$ to the power
of the in-degree of the vertex $v$ (the number of arrows directed in to $v$),  
\[\Phi_{X,W,S}^{\mathrm{deg}_+}(K)=\sum_{v\in V} u^{w(v)}w^{\deg_+(v)}\]
and we further define
\item \textit{The arrow weight two-variable polynomial} to be the
sum, over all edges $e$ in the edge set $E$ of the quiver, of products of 
a formal variable $s$ to the power of the vertex weight at the source 
vertex $S(e)$ times a formal variable $t$ to the power of the vertex weight
at the target vertex $T(e)$,
\[\Phi_{X,W,S}^{2}(K)=\sum_{e\in E} s^{S(e)}t^{T(e)}.\]
\end{itemize}
\end{definition}

\begin{proposition}[\cite{NS2}]
The polynomials $\Phi_{X,W,S}^{\mathrm{deg}_+}(K)$ and $\Phi_{X,W,S}^{2}(K)$ are 
invariants of oriented classical and virtual knots.
\end{proposition}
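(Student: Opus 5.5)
The plan is to deduce the statement directly from the invariance of the weighted quiver $\mathcal{Q}_{X,W,S}(K)$ (the preceding theorem from \cite{NS2}). The point is that both polynomials are computed purely from the isomorphism class of a vertex-weighted quiver. First I will make precise what invariance of $\mathcal{Q}_{X,W,S}(K)$ means. Suppose $D$ and $D'$ are Gauss diagrams related by a Reidemeister move. Then there is a bijection $f:V\to V'$ between the $X$-colorings of $D$ and of $D'$, given by the unique coloring correspondence of the biquandle axioms. This bijection has three properties:
\begin{itemize}
\item it preserves vertex weights, $w(f(v))=w(v)$, since $\Sigma_D$ is unchanged by Gauss diagram Reidemeister moves;
\item it commutes with every $\phi\in S$, so the edge $v\to\phi(v)$ goes to the edge $f(v)\to\phi(f(v))$;
\item it therefore induces a bijection $E\to E'$ on edges, one edge for each pair $(v,\phi)$ with $\phi\in S$.
\end{itemize}
The commuting property holds because applying $\phi$ to a coloring and then performing the move gives the same result as performing the move and then applying $\phi$. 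This follows from the uniqueness of the extended coloring together with the fact that $\phi$ is a homomorphism of both operations.

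Next I will check that each polynomial is an isomorphism invariant of vertex-weighted quivers. For $\Phi^{\deg_+}_{X,W,S}$, the in-degree of $v$ is the number of pairs $(v',\phi)$ with $\phi(v')=v$. The edge bijection maps these pairs bijectively onto the pairs $(f(v'),\phi)$ with $\phi(f(v'))=f(v)$, so $\deg_+(f(v))=\deg_+(v)$. Combined with weight preservation, each term $u^{w(v)}w^{\deg_+(v)}$ is matched with an equal term, and the sums agree. For $\Phi^2_{X,W,S}$, the edge bijection sends an edge $e$ with source $S(e)$ and target $T(e)$ to an edge with source $f(S(e))$ and target $f(T(e))$. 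These have the same weights, so each term $s^{w(S(e))}t^{w(T(e))}$ is preserved. Here $s^{S(e)}$ is read as $s$ raised to the weight of the source vertex. Reindexing the sums by $f$ then gives equality. Since any two diagrams of $K$ are related by a finite sequence of moves, both polynomials are independent of the chosen diagram.

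I expect essentially no serious obstacle. The only point requiring care is the first step. The earlier theorem is phrased as ``the weighted quiver is an invariant,'' and I must extract from it an explicit isomorphism that respects weights and edge multiplicities, rather than merely an abstractly isomorphic quiver. Multiple edges matter here: distinct $\phi,\phi'\in S$ may give parallel edges $v\to\phi(v)=\phi'(v)$. The edge set must be indexed by pairs $(v,\phi)$ so that multiplicities are counted correctly in both $\deg_+$ and $\Phi^2$. With that convention fixed, the rest is a reindexing argument.
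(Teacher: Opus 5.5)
Your proposal is correct and is essentially the intended argument: the paper quotes this result from \cite{NS2} without a separate proof, and the justification is that both polynomials are computed from the weighted quiver $\mathcal{Q}_{X,W,S}(K)$, whose invariance is the preceding theorem; your weight-preserving, $S$-equivariant bijection of colorings makes that quiver isomorphism explicit. Indexing edges by pairs $(v,\phi)$ so that loops and parallel edges are counted correctly is the right convention, though any isomorphism of weighted quivers (which includes a bijection on edges compatible with sources and targets) would already suffice.
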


See \cite{NS2} for more.

\section{\Large\textbf{Biquandle Arrow Weight Quiver Representations}}\label{BW}

We now come to our main new result.

\begin{definition}
Let $X$ be a finite biquandle, $A$ an abelian group (usually finite), $B$ a 
set of biquandle arrow weights with values in $A$, $k$ a choice of coefficient
ring (e.g., $\mathbb{Z}$ or $\mathbb{C}$) and $S$ a set of endomorphisms of
$X$. We will summarize this set of choices as a data vector $(X,A,B,k,S)$.

Given an oriented classical or virtual knot $K$, we then define a quiver 
representation from the biquandle coloring quiver $\mathcal{BCQ}_{X,S}(K)$
to the category of $k$-modules as follows. First, to each vertex in
in $\mathcal{BCQ}_{X,S}(K)$ we assign a copy of $k[A]$, the free $k$-module on
$A$. Each arrow $\mathcal{A}$ in $\mathcal{BCQ}_{X,S}(K)$ connects a vertex 
$v$ representing
an $X$-colored Gauss diagram corresponding to another vertex $\sigma(v)$
representing another $X$-colored diagram for some endomorphism $\sigma\in S$. 
Each arrow weight $b\in B$ evaluates to an element of $A$ on each vertex
in the quiver; then along a fixed arrow each $b\in B$ sends an element 
$b(v)\in A$, i.e. a basis vector for $k[A]$, to another basis vector 
$b(\sigma(v))$. Summing over $b\in B$, we obtain a linear transformation
$f_{\mathcal{A}}:k[A]\to k[A]$ associated to each arrow $\mathcal{A}$. 
This assignment of modules to vertices and linear transformations to edges
constitutes the \textit{biquandle arrow weight quiver representation}
associated to the data vector $\vec{v}=(X,A,B,k,S)$, which we will denote
as $\mathcal{QR}_{\vec{v}}(K)$.
\end{definition}

In particular, at each vertex we have a \textit{distinguished subspace}
of $k[A]$ generated by the elements $b(v)$ for $b\in B$, and the linear
transformations $f_{\mathcal{A}}$ send these distinguished subspaces to
distinguished subspaces.

Analogously to Proposition 3 in \cite{N24}, we have:

\begin{theorem}
For any choice of data vector $(X,A,B,k,S)$, the resulting biquandle arrow
weight quiver representation is an invariant of oriented classical and
virtual knots.
\end{theorem}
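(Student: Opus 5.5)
The plan is to reduce the statement to the invariance results already quoted: the invariance of the biquandle coloring quiver $\mathcal{BCQ}_{X,S}(K)$ and of the weighted quiver $\mathcal{Q}_{X,W,S}(K)$ from \cite{NS2}, applied once for each $b\in B$. Concretely, suppose $D$ and $D'$ are Gauss diagrams of $K$ related by a single Gauss diagram Reidemeister move. I will build an isomorphism of quiver representations $\mathcal{QR}_{\vec{v}}(D)\cong\mathcal{QR}_{\vec{v}}(D')$. That means a quiver isomorphism $F$ between the underlying coloring quivers, together with a linear isomorphism $k[A]\to k[A]$ at each vertex, such that every square formed by an arrow $\mathcal{A}$ and its image $F(\mathcal{A})$ commutes. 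For general sequences of moves one then composes these isomorphisms.

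First, I define $F$ on vertices. Since the biquandle axioms give a unique $X$-coloring after a move agreeing with the original outside the neighborhood of the move, we get a bijection $v\mapsto v'$ between $X$-colorings of $D$ and of $D'$. I will check that this bijection commutes with every $\sigma\in S$. Applying $\sigma$ to the colors and then performing the move gives a valid coloring of $D'$. That coloring agrees with $\sigma(v')$ outside the neighborhood of the move, so by uniqueness the two coincide, i.e.\ $(\sigma(v))'=\sigma(v')$. Hence $F$ sends the arrow $v\to\sigma(v)$ labeled $\sigma$ to the arrow $v'\to\sigma(v')$, which recovers the invariance of $\mathcal{BCQ}_{X,S}(K)$ as in \cite{CN}.

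Second, I take the vertex maps to be identities on $k[A]$. This works because, by the arrow weight invariance theorem of \cite{NS} applied separately to each $b\in B$, the value $b(v)=\Sigma_D$ computed on $v$ equals $b(v')$ computed on $D'$. The transformation $f_{\mathcal{A}}$ is defined from the data $\{(b(v),b(\sigma(v)))\}_{b\in B}$, so it depends only on these weights. It therefore coincides with $f_{F(\mathcal{A})}$, which is defined from $\{(b(v'),b(\sigma(v')))\}_{b\in B}$. The squares commute trivially, and the distinguished subspaces spanned by the $b(v)$ correspond as well.

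The main obstacle is whether $f_{\mathcal{A}}$ is actually well-defined as a linear map. If two weights $b_1\neq b_2$ satisfy $b_1(v)=b_2(v)$ but $b_1(\sigma(v))\neq b_2(\sigma(v))$, then "sending basis vector $b(v)$ to $b(\sigma(v))$" is ambiguous. I would resolve this by reading the definition as: $f_{\mathcal{A}}$ is the sum over $b\in B$ of the rank-one maps $e_{b(v)}^{*}\otimes e_{b(\sigma(v))}$, sending $e_{b(v)}\mapsto e_{b(\sigma(v))}$ and killing all other basis vectors. With this reading, the map is determined purely by the multiset of pairs of weights, and the argument above goes through verbatim. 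Any other well-defined convention that depends only on the pairs $(b(v),b(\sigma(v)))$ also works, so invariance holds regardless of which convention is adopted. Finally, independence from the choice of representing diagram follows by transporting along Reidemeister moves in the usual way.
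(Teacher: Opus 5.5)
Your proposal is correct and takes essentially the same approach as the paper, whose entire proof is the remark that nothing in the construction changes under Reidemeister moves. You supply the details behind that remark: the coloring bijection $v\mapsto v'$ commutes with each $\sigma\in S$, each weight $b(v)$ is invariant by \cite{NS}, and $f_{\mathcal{A}}$ depends only on the pairs $(b(v),b(\sigma(v)))$. Your sum-of-rank-one-maps reading of $f_{\mathcal{A}}$ is also the one the paper's examples use; for instance, the entry $2$ in $M_2$ comes from two weights both sending $\vec{0}$ to $\vec{0}$.
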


\begin{proof}
Nothing in the construction changes under Reidemeister moves.
\end{proof}

Let us illustrate the construction with an example.

%

\begin{example}
Let $K$ be the virtual trefoil knot $2.1$. Let $X$ be the biquandle given 
by the operation tables 
\[
\begin{array}{r|rr}
\utr & 1 & 2 \\ \hline
1 & 2 & 2 \\
2 & 1 & 1
\end{array}
\quad
\begin{array}{r|rr}
\otr & 1 & 2 \\ \hline
1 & 2 & 2 \\
2 & 1 & 1
\end{array},
\] 
let $A=\mathbb{Z}_4=\{\vec{0},\vec{1},\vec{2},\vec{3}\}$
where we write vector arrows to remind ourselves that the elements
of $A$ are basis vectors for $k[A]$, and let $k=\mathbb{Z}$. We compute 
that $X$ has eight biquandle
arrow weights over $A$, including the two represented by the 4-tensors
\[
b_1=
\left[\begin{array}{rr}
\left[\begin{array}{rr} 0 & 3 \\ 1 & 2 \end{array}\right] & 
\left[\begin{array}{rr} 3 & 0 \\ 0 & 3 \end{array}\right] \\
& \\
\left[\begin{array}{rr} 1 & 0 \\ 0 & 1 \end{array}\right] &
\left[\begin{array}{rr} 2 & 3 \\ 1 & 0\end{array}\right] 
\end{array}\right]\ \mathrm{and} \
b_2=
\left[\begin{array}{rr}
\left[\begin{array}{rr} 0 & 1 \\ 1 & 0 \end{array}\right] & 
\left[\begin{array}{rr} 1 & 0 \\ 2 & 3 \end{array}\right] \\
& \\
\left[\begin{array}{rr} 1 & 2 \\ 0 & 3 \end{array}\right] &
\left[\begin{array}{rr} 0 & 3 \\ 3 & 0\end{array}\right] 
\end{array}\right].
\]

The two $X$-colorings of $K$ are represented as Gauss diagrams by
\[\includegraphics{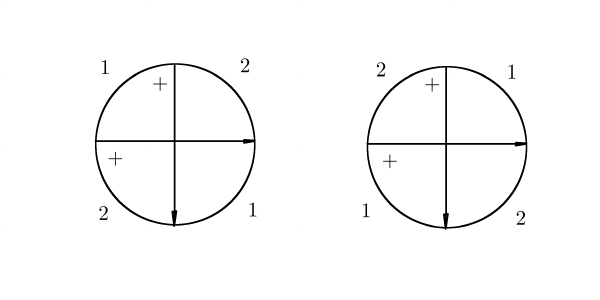}\]
Then the left diagram has arrow weights 
$b_1((2,1),(1,2))=\vec{0}$ and $b_2((2,1),(1,2))=\vec{2}$ 
and the right diagram has arrow weights
$b_1((1,2),(2,1))=\vec{0}$ and $b_2((1,2),(2,1))=\vec{2}$ 
respectively. There are two endomorphisms of $S$, representable
as $S=\{[1,2],[2,1]\}$ where we write $\sigma=[\sigma(1),\dots, \sigma(n)]$.
The arrow from the left coloring to the right induced by $[2,1]$ sends the
$\mathbb{Z}[A]$ basis vector $b_1(v_1)=\vec{0}$ to $b_1(v_2)=\vec{0}$ and
sends $b_2(v_1)=\vec{2}$ to $b_2(v_2)=\vec{2}$; summing these, we get 
the matrix
\[\left[\begin{array}{rrrr} 
1 & 0 & 0 & 0 \\
0 & 0 & 0 & 0 \\
0 & 0 & 1 & 0 \\
0 & 0 & 0 & 0 \\
\end{array}\right]
\]
along this arrow. Repeating for the other arrows, we have invariant
quiver representation $\mathcal{QR}_{\vec{v}}(K)$ (where 
$\vec{v}=(X,\mathbb{Z}_4,\{b_1,b_2\},\mathbb{Z},\{[1,2],[2,1]\})$)
given by
\[\includegraphics{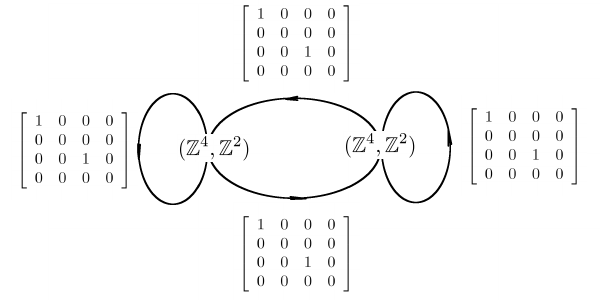}.\]
\end{example}

\begin{example}\label{ex:2}
Let $X$ be the biquandle given by the operation tables
\[
\begin{array}{r|rrr}
\utr & 1 & 2 & 3 \\ \hline
1 & 3 & 3 & 3 \\
2 & 1 & 1 & 1 \\
3 & 2 & 2 & 2 \\
\end{array}
\quad
\begin{array}{r|rrr}
\otr & 1 & 2 & 3 \\ \hline
1 & 3 & 3 & 3 \\
2 & 1 & 1 & 1 \\
3 & 2 & 2 & 2 \\
\end{array}
\]
We compute via \texttt{python} that $X$ has biquandle arrow weights over
$\mathbb{Z}_6$ including
\[\scalebox{0.95}{$
b_1=
\left[\begin{array}{ccc}
\left[\begin{array}{rrr}0& 2& 1 \\ 1& 0& 2 \\ 2& 1& 0 \end{array}\right] &
\left[\begin{array}{rrr}2& 0& 3 \\ 3& 2& 0 \\ 0& 3& 2 \end{array}\right] &
\left[\begin{array}{rrr}1& 3& 0 \\ 0& 1& 3 \\ 3& 0& 1 \end{array}\right] \\
 & & \\
\left[\begin{array}{rrr}1& 3& 0 \\ 0& 1& 3 \\ 3& 0& 1 \end{array}\right] &
\left[\begin{array}{rrr}0& 2& 1 \\ 1& 0& 2 \\ 2& 1& 0 \end{array}\right] &
\left[\begin{array}{rrr}2& 0& 3 \\ 3& 2& 0 \\ 0& 3& 2 \end{array}\right] \\
 & & \\
\left[\begin{array}{rrr}2& 0& 3 \\ 3& 2& 0 \\ 0& 3& 2\end{array}\right] &
\left[\begin{array}{rrr}1& 3& 0 \\ 0& 1& 3 \\ 3& 0& 1 \end{array}\right] &
\left[\begin{array}{rrr}0& 2& 1 \\ 1& 0& 2 \\ 2& 1& 0\end{array}\right]
\end{array}\right],\
b_2=
\left[\begin{array}{ccc}
\left[\begin{array}{rrr}0& 3& 3 \\ 3& 0& 3 \\ 3& 3& 0\end{array}\right] &
\left[\begin{array}{rrr}3& 0& 0 \\ 0& 3& 0 \\ 0& 0& 3\end{array}\right] &
\left[\begin{array}{rrr}3& 0& 0 \\ 0& 3& 0 \\ 0& 0& 3\end{array}\right] \\
 & & \\
\left[\begin{array}{rrr}3& 0& 0 \\ 0& 3& 0 \\ 0& 0& 3\end{array}\right] &
\left[\begin{array}{rrr}0& 3& 3 \\ 3& 0& 3 \\ 3& 3& 0\end{array}\right] &
\left[\begin{array}{rrr}3& 0& 0 \\ 0& 3& 0 \\ 0& 0& 3\end{array}\right] \\
 & & \\
\left[\begin{array}{rrr}3& 0& 0 \\ 0& 3& 0 \\ 0& 0& 3\end{array}\right] &
\left[\begin{array}{rrr}3& 0& 0 \\ 0& 3& 0 \\ 0& 3& 3\end{array}\right] &
\left[\begin{array}{rrr}0& 3& 3 \\ 3& 0& 3 \\ 3& 3& 0\end{array}\right] 
\end{array}\right]$}
\]
and that the maps $\sigma_1=[3,1,2]$ and $\sigma_2=[2,3,1]$ are endomorphisms 
of $X$. 

Then the virtual knots $K_1$ and $K_2$ below have the biquandle arrow 
weight quiver representations
\[\includegraphics{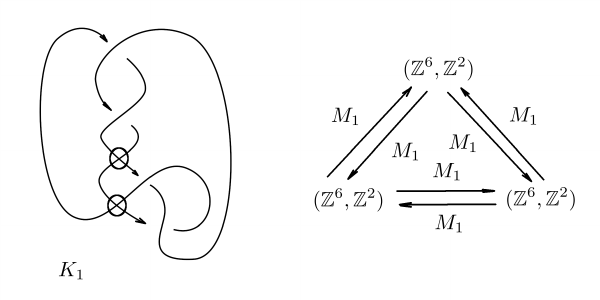}\]
\[\includegraphics{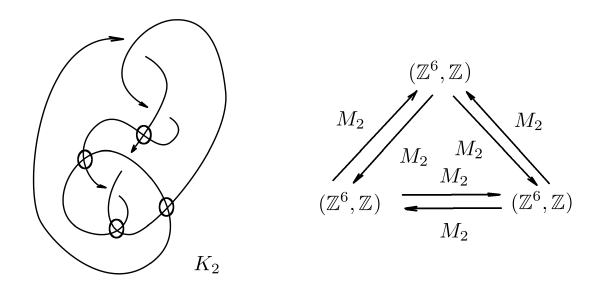}\]
where 
\[
M_1=\left[\begin{array}{rrrrrr}
1 & 0 & 0 & 0 & 0 & 0\\
0 & 0 & 0 & 0 & 0 & 0\\
0 & 0 & 0 & 0 & 0 & 0\\
0 & 0 & 0 & 1 & 0 & 0\\
0 & 0 & 0 & 0 & 0 & 0\\
0 & 0 & 0 & 0 & 0 & 0\\
\end{array}\right]\ \mathrm{and}\
M_2=\left[\begin{array}{rrrrrr}
2 & 0 & 0 & 0 & 0 & 0\\
0 & 0 & 0 & 0 & 0 & 0\\
0 & 0 & 0 & 0 & 0 & 0\\
0 & 0 & 0 & 0 & 0 & 0\\
0 & 0 & 0 & 0 & 0 & 0\\
0 & 0 & 0 & 0 & 0 & 0\\
\end{array}\right].
\]
The unenhanced biquandle coloring quivers are the 
same but their arrow weight representations are different, showing that
$\mathcal{QR}_{\vec{v}}$ is a proper enhancement of both the biquandle
counting invariant and the coloring quiver. 
\end{example}

Many important constructions in knot theory and elsewhere start with
quiver representations; for example, the Khovanov cube at the heart
of Khovanov homology is a quiver representation. Following \cite{N24},
we define four new infinite families of invariants of classical and
virtual knots associated to data vectors $(X,A,B,k,S)$ which can be
regarded as decategorifications of the biquandle arrow weight quiver
representation invariant.

To define our invariants we must first recall a definition from \cite{N24}:
given a matrix $M\in M_{m,n}(A)$ with entries $m_{j,k}$ in row $j$ column
$k$, the \textit{matrix polynomial} of $M$, denoted $mp(M)$, is
\[mp(M)=\sum_{j=1}^{m}\sum_{k=1}^n m_{jk}x^jy^k.\]
That is, the matrix polynomial is a two-variable polynomial encoding the 
entries of the matrix as coefficients.

\begin{definition}
Let $X$ be a finite biquandle, $A$ an abelian group, $B$ a biquandle arrow
weight with $A$ coefficients, $k$ a choice of commutative initial coefficient
ring and $S\subset \mathrm{Hom}(X,X)$ a set of biquandle endomorphisms. 
Then for any oriented classical or virtual knot or link $K$ represented by 
a Gauss diagram $D$, let $E$ be the set of edges, let $M(e)$ the matrix 
associated to the edge $e$ and let $P$ be the set of
maximal (i.e., not a subpath of any other path) non-repeating (i.e., does not 
contain the same edge more than once) paths in the biquandle arrow weight
quiver representation of $K$ associated to the data vector 
$\vec{v}=(X,A,B,k,S)$. Then we define:
\begin{itemize}
\item The \textit{biquandle arrow weight edge characteristic polynomial}
is the sum of the characteristic polynomials of all of the matrices
in the quiver representation, i.e.
\[\Phi_{\vec{v}}^{E,\mathcal{\chi}}(K)=\sum_{e\in E}\chi(M(e))\]
and
\item The \textit{biquandle arrow weight edge matrix polynomial}
is the sum of the matrix polynomials of all of the matrices
in the quiver representation, i.e. 
\[\Phi_{\vec{v}}^{E,M}(K)=\sum_{e\in E} mp(M(e)).\]
\end{itemize}
Now for each path $p=(e_1,\dots, e_n)\in P$, let $M(p)=M(e_n)\dots M(e_1)$ 
be the product along $p$ of the matrices associated to the edges comprising
$p$ (note the reversed order since we think of matrix product as right-to-left)
and let $|p|=n$ be the number of edges in the path $p$. Then we define
\begin{itemize}
\item The \textit{biquandle arrow weight maximal path characteristic polynomial}
is the sum of the characteristic polynomials of all of the matrices
accumulated along the maximal non-repeating paths in the quiver 
representation times a variable encoding the length of the paths, i.e.
\[\Phi_{\vec{v}}^{MP,\mathcal{\chi}}(K)=\sum_{p\in P}\chi(M(p))s^{|p|}\]
and
\item The \textit{biquandle arrow weight maximal path matrix polynomial}
is the sum of the matrix polynomials of all of the matrices
accumulated along the maximal non-repeating paths in the quiver 
representation times a variable encoding the length of the paths, i.e.
\[\Phi_{\vec{v}}^{MP,M}(K)=\sum_{p\in P} mp(M(p))z^{|p|}.\]
\end{itemize}
\end{definition}

\begin{example}
The virtual knots in Example \ref{ex:2} have the following values of the
four decategorification polynomials:

\begin{table}[htbp]
\centering
\small
\begin{tabular}{c|cccc}
\hline
$K$
&
$\Phi_{\bar v}^{E,X}(K)$
&
$\Phi_{\bar v}^{E,M}(K)$
&
$\Phi_{\bar v}^{M,P,X}(K)$
&
$\Phi_{\bar v}^{M,P,M}(K)$
\\
\hline

$K_1$
&
$6t^{6}-12t^{5}+6t^{4}$
&
$6x^{3}y^{3}+6$
&
\makecell{$18s^{6}t^{6}-36s^{6}t^{5}+18s^{6}t^{4}$\\
$+6s^{4}t^{6}-12s^{4}t^{5}+6s^{4}t^{4}$}
&
\makecell{$18z^{6}(x^{3}y^{3}+1)$\\
$+6z^{4}(x^{3}y^{3}+1)$}
\\

$K_2$
&
$6t^{6}-12t^{5}$
&
$12$
&
\makecell{$18s^{6}t^{6}-1152s^{6}t^{5}$\\
$+6s^{4}t^{6}-96s^{4}t^{5}$}
&
\makecell{$1152z^{6}$\\
$+96z^{4}$}
\\

\hline
\end{tabular}
\end{table}
\end{example}

\begin{example}
We computed the values of the edge polynomial invariants for a choice of 
orientation for each prime virtual knots with up to four crossings as found at \cite{KA} 
using our \texttt{python} code for the data vector $(X,A,B,k,S)$ where
$X$ is the biquandle given by the operation tables
\[
\begin{array}{r|rrr}
\utr & 1 & 2 & 3 \\ \hline
1 & 2 & 2 & 2 \\
2 & 3 & 3 & 3 \\
3 & 1 & 1 & 1
\end{array}
\quad \begin{array}{r|rrr}
\otr & 1 & 2 & 3 \\ \hline
1 & 2 & 2 & 2 \\
2 & 3 & 3 & 3 \\
3 & 1 & 1 & 1
\end{array},
\]
$A=\mathbb{Z}_8$, $B=\{\beta_1,\beta_2,\beta_3\}$ is given by
\[\beta_1=
\left[\begin{array}{rrr}
\left[\begin{array}{rrr} 0& 1& 3 \\ 3& 0& 1 \\ 1& 3& 0\end{array}\right] &  
\left[\begin{array}{rrr} 1& 0& 4 \\ 4& 1& 0 \\ 0& 4& 1\end{array}\right]&  
\left[\begin{array}{rrr} 3& 4& 0 \\ 0& 3& 4 \\ 4& 0& 3\end{array}\right] \\
\left[\begin{array}{rrr} 3& 4& 0 \\ 0& 3& 4 \\ 4& 0& 3\end{array}\right]&  
\left[\begin{array}{rrr} 0& 1& 3 \\ 3& 0& 1 \\ 1& 3& 0\end{array}\right]&   
\left[\begin{array}{rrr} 1& 0& 4 \\ 4& 1& 0 \\ 0& 4& 1\end{array}\right] \\
\left[\begin{array}{rrr} 1& 0& 4 \\ 4& 1& 0 \\ 0& 4& 1\end{array}\right]&   
\left[\begin{array}{rrr} 3& 4& 0 \\ 0& 3& 4 \\ 4& 0& 3\end{array}\right]&  
\left[\begin{array}{rrr} 0& 1& 3 \\ 3& 0& 1 \\ 1& 3& 0\end{array}\right]\end{array}\right],
\beta_2=
\left[\begin{array}{rrr} 
\left[\begin{array}{rrr} 0& 2& 2 \\ 2& 0& 2 \\ 2& 2& 0 \end{array}\right]&   
\left[\begin{array}{rrr} 2& 0& 4 \\ 4& 2& 0 \\ 0& 4& 2 \end{array}\right]&  
\left[\begin{array}{rrr} 2& 4& 0 \\ 0& 2& 4 \\ 4& 0& 2 \end{array}\right]  \\
\left[\begin{array}{rrr} 2& 4& 0 \\ 0& 2& 4 \\ 4& 0& 2 \end{array}\right]&  
\left[\begin{array}{rrr} 0& 2& 2 \\ 2& 0& 2 \\ 2& 2& 0 \end{array}\right]&  
\left[\begin{array}{rrr} 2& 0& 4 \\ 4& 2& 0 \\ 0& 4& 2 \end{array}\right] \\
\left[\begin{array}{rrr} 2& 0& 4 \\ 4& 2& 0 \\ 0& 4& 2 \end{array}\right]&  
\left[\begin{array}{rrr} 2& 4& 0 \\ 0& 2& 4 \\ 4& 0& 2 \end{array}\right]&  
\left[\begin{array}{rrr} 0& 2& 2 \\ 2& 0& 2 \\ 2& 2& 0 \end{array}\right]\end{array}\right],
\] and
\[
\beta_3=
\left[\begin{array}{rrr}
\left[\begin{array}{rrr} 0& 6& 6 \\ 6& 0& 6 \\ 6& 6& 0 \end{array}\right]&  
\left[\begin{array}{rrr} 6& 0& 4 \\ 4& 6& 0 \\ 0& 4& 6 \end{array}\right]&  
\left[\begin{array}{rrr} 6& 4& 0 \\ 0& 6& 4 \\ 4& 0& 6 \end{array}\right] \\
\left[\begin{array}{rrr} 6& 4& 0 \\ 0& 6& 4 \\ 4& 0& 6 \end{array}\right]&  
\left[\begin{array}{rrr} 0& 6& 6 \\ 6& 0& 6 \\ 6& 6& 0 \end{array}\right]&  
\left[\begin{array}{rrr} 6& 0& 4 \\ 4& 6& 0 \\ 0& 4& 6 \end{array}\right] \\
\left[\begin{array}{rrr} 6& 0& 4 \\ 4& 6& 0 \\ 0& 4& 6 \end{array}\right]&  
\left[\begin{array}{rrr} 6& 4& 0 \\ 0& 6& 4 \\ 4& 0& 6 \end{array}\right]&  
\left[\begin{array}{rrr} 0& 6& 6 \\ 6& 0& 6 \\ 6& 6& 0 \end{array}\right]\end{array}\right]\]
and $S=\{[2,3,1],[3,1,2]\}$.

The results are in the table.

\begin{longtable}{p{0.62\textwidth}|p{0.30\textwidth}}
\hline
$K$ &
$(\Phi_{\bar v}^{E,\chi}(K),\ \Phi_{\bar v}^{E,M}(K))$
\\
\hline
\endfirsthead

\hline
$K$ &
$(\Phi_{\bar v}^{E,\chi}(K),\Phi_{\bar v}^{E,M}(K))$
\\
\hline
\endhead

2.1, 3.1, 3.2, 4.4, 4.5, 4.9, 4.14, 4.27, 4.30, 4.37, 4.44, 4.48,
4.52, 4.54, 4.60, 4.64, 4.69, 4.74, 4.82, 4.84, 4.91, 4.94, 4.102
&
$(6t^{8}-18t^{7},\,18x^{4}y^{4})$
\\

3.3, 3.4, 3.5, 3.6, 3.7, 4.1, 4.2, 4.3, 4.6, 4.7, 4.8, 4.12, 4.16,
4.21, 4.25, 4.31, 4.36, 4.43, 4.46, 4.53, 4.55, 4.56, 4.59, 4.65,
4.71, 4.73, 4.75, 4.76, 4.77, 4.85, 4.86, 4.89, 4.90, 4.92, 4.95,
4.96, 4.98, 4.99, 4.100, 4.101, 4.104, 4.105, 4.106, 4.107, 4.108
&
$(6t^{8}-18t^{7},\,18)$
\\

4.10, 4.18, 4.19, 4.33, 4.35, 4.40, 4.50, 4.57, 4.70
&
$(6t^{8}-18t^{7}+12t^{6},\,6x^{2}y^{2}+12)$
\\

4.11, 4.63, 4.68
&
$(6t^{8}-18t^{7}+12t^{6},\,12x^{4}y^{4}+6x^{2}y^{2})$
\\

4.13, 4.15, 4.20, 4.29, 4.34, 4.38, 4.41, 4.49, 4.51, 4.58, 4.72
&
$(6t^{8}-18t^{7}+12t^{6},\,6x^{6}y^{6}+12x^{4}y^{4})$
\\

4.17, 4.23, 4.32, 4.61, 4.67
&
$(6t^{8}-18t^{7}+12t^{6},\,6x^{6}y^{6}+12)$
\\

4.22, 4.62, 4.83, 4.87, 4.88
&
$(6t^{8}-18t^{7}+18t^{6}-6t^{5},\,
6x^{6}y^{6}+6x^{3}y^{3}+6x^{2}y^{2})$
\\

4.24, 4.26, 4.42, 4.47, 4.80
&
$(6t^{8}-18t^{7}+18t^{6}-6t^{5},\,
6x^{6}y^{6}+6x^{2}y^{2}+6xy)$
\\

4.28, 4.39, 4.45, 4.78, 4.81, 4.97, 4.103
&
$(6t^{8}-18t^{7}+18t^{6}-6t^{5},\,
6x^{6}y^{6}+6x^{5}y^{5}+6x^{2}y^{2})$
\\

4.66, 4.79, 4.93
&
$(6t^{8}-18t^{7}+18t^{6}-6t^{5},\,
6x^{7}y^{7}+6x^{6}y^{6}+6x^{2}y^{2})$
\\
\hline

\end{longtable}

\end{example}

\begin{example}
We computed the values of the maximal path polynomial invariants 
for the prime virtual knots with up to four crossings as found at \cite{KA} 
using our \texttt{python} code for the data vector 
\[\vec{v}=(X,\mathbb{Z}_{12},\{b_1,b_2,b_3\},\mathbb{Z},\{[1,2],[2,1]\})\]
where 
\[b_1=
\left[\begin{array}{rr}
\left[\begin{array}{rr} 0 & 9 \\ 9 & 0 \end{array}\right] & 
\left[\begin{array}{rr} 9 & 0 \\ 6 & 3 \end{array}\right] \\
\left[\begin{array}{rr} 9 & 6 \\ 0 & 3 \end{array}\right] &
\left[\begin{array}{rr} 0 & 3 \\ 3 & 0 \end{array}\right] 
\end{array}\right],
b_2=
\left[\begin{array}{rr}
\left[\begin{array}{rr} 0 & 0 \\ 6 & 6 \end{array}\right] & 
\left[\begin{array}{rr} 0 & 0 \\ 6 & 6 \end{array}\right] \\
\left[\begin{array}{rr} 6 & 6 \\ 0 & 0 \end{array}\right] &
\left[\begin{array}{rr} 6 & 6 \\ 0 & 0\end{array}\right] 
\end{array}\right],
b_3=
\left[\begin{array}{rr}
\left[\begin{array}{rr} 0 & 6 \\ 0 & 6 \end{array}\right] & 
\left[\begin{array}{rr} 6 & 0 \\ 6 & 0 \end{array}\right] \\
\left[\begin{array}{rr} 0 & 6 \\ 0 & 6\end{array}\right] &
\left[\begin{array}{rr} 6 & 0 \\ 6 & 0 \end{array}\right] 
\end{array}\right]
\]
and $X$ is the biquandle given by the operation tables
\[
\begin{array}{r|rr}
\utr & 1 & 2 \\ \hline
1 & 2 & 2 \\
2 & 1 & 1
\end{array}
\quad
\begin{array}{r|rr}
\otr & 1 & 2 \\ \hline
1 & 2 & 2 \\
2 & 1 & 1
\end{array}.
\] 
The results are in the tables.

\[\begin{array}{l|l}
\hline
 \Phi_{\vec{v}}^{MP,\chi}(K) & K \\ \hline
2.1, 3.1, 3.2, 3.3, 3.4, 3.5, 3.6, 3.7, 4.1, 4.2, 4.3, 4.4, 4.5, 4.6, 4.7, 4.8, 4.9, 4.10, & 4s^4t^{12} - 324s^4t^{11}  \\ 
4.11, 4.12, 4.13, 4.14, 4.15, 4.16, 4.17, 4.18, 4.19, 4.20, 4.21, 4.22, 4.23, 4.24, & \\ 4.25, 4.26, 4.27, 4.28, 4.43, 4.44, 4.45, 4.46, 4.47, 4.48, 4.49, 4.50, 4.51, 4.52, &\\ 4.53, 4.54, 4.55, 4.56, 4.73, 4.74, 4.75, 4.76, 4.77, 4.78, 4.79, 4.80, 4.81, 4.82, & \\   4.83, 4.84, 4.85, 4.86, 4.87, 4.88, 4.89, 4.90, 4.91, 4.92, 4.93, 4.94, 4.95, 4.96, & \\  4.97, 4.98, 4.99, 4.100, 4.101, 4.102, 4.103, 4.104, 4.105, 4.106, 4.107, 4.108 & \\
4.29, 4.30, 4.31, 4.32, 4.33, 4.34, 4.35, 4.36, 4.37, 4.38, 4.39, 4.40, 4.41, 4.42, & 4s^4t^{12}-68s^4t^{11} + 64s^4t^{10}  \\  4.57, 4.58, 4.59, 4.60, 4.61, 4.62, 4.63, 4.64, 4.65, 4.66, 4.67, 4.68, 4.69, 4.70, \\ 4.71, 4.72  & \\ \hline
\end{array}
\]
\[
\begin{array}{l|l}
\hline
\Phi_{\vec{v}}^{MP,M}(K) & K \\ \hline
2.1, 3.2, 3.3, 3.4, 4.4, 4.5, 4.9, 4.11, 4.14, 4.15, 4.18, 4.20, 4.22, 4.27, 4.28, 4.44, 4.45, 4.48, & 324x^6y^6z^4 \\  4.49, 4.52, 4.54, 4.74, 4.78, 4.81, 4.82, 4.83, 4.84, 4.87, 4.88, 4.92, 4.94, 4.95, 4.101, 4.104 & \\
3.1, 3.5, 3.6, 3.7, 4.1, 4.2, 4.3, 4.6, 4.7, 4.8, 4.10, 4.12, 4.13, 4.16, 4.17, 4.19, 4.21, 4.23, 4.24, & 324z^4  \\  4.25, 4.26, 4.43, 4.46, 4.47, 4.50, 4.51, 4.53, 4.55, 4.56, 4.73, 4.75, 4.76, 4.77, 4.79, 4.80, &\\  4.85, 4.86, 4.89, 4.90, 4.91, 4.93, 4.96, 4.97, 4.98, 4.99, 4.100, 4.102, 4.103, 4.105, 4.106, & \\  4.107, 4.108 \\
4.29, 4.30, 4.33, 4.34, 4.37, 4.38, 4.39, 4.40, 4.60, 4.61, 4.62, 4.63, 4.64, 4.69 & 4z^4(16x^6y^6 + 1) \\
4.31, 4.32, 4.35, 4.36, 4.41, 4.42, 4.57, 4.58, 4.59, 4.65, 4.66, 4.67, 4.68, 4.70, 4.71, 4.72 & 4z^4(x^6y^6 + 16) \\
\hline
\end{array}
\]

\end{example}

\section{\Large\textbf{Questions}}\label{Q}

We conclude with some questions for future research. 

First, we note that the examples in this paper are small toy examples 
using small finite biquandles and small finite rings, meant
to show the computation of the invariants and should not be taken as
representative of the power of this infinite family of invariants.
Examples with larger biquandles, more endomorphisms and larger abelian 
groups are expect to yield stronger invariants.

In light of this, faster computation algorithms for finding biquandle 
arrow weights for larger choices of biquandle and abelian group are
of great interest.

What are the geometric meanings of these invariants? How do they relate 
to other quiver representations? What is the algebraic structure of the set of
such representations for a fixed knot, biquandle etc.?

\bibliographystyle{abbrv}
\bibliography{sam-migiwa}

\bigskip

\noindent
\textsc{Department of Mathematical Sciences \\
Claremont McKenna College \\
850 Columbia Ave. \\
Claremont, CA 91711} 

\

\noindent
\textsc{ \\
Department of Materials Science and Engineering,\\
College of Engineering, Shibaura Institute of Technology,\\
307 Fukasaku, Minuma-ku, Saitama-shi, Saitama, 337-8570, Japan
}

\end{document}